%
%


%

\documentclass[11pt,a4paper]{amsart}
\usepackage{amssymb,amsmath,layout}

\theoremstyle{plain}
\newtheorem{thrm}{Theorem}[section]
\newtheorem{lemma}[thrm]{Lemma}

\newtheorem{rmrk}[thrm]{Remark}
\newtheorem{dfn}[thrm]{Definition}

\newtheorem*{hyp}{HYPOTHESIS}
\setlength{\topmargin}{-0.2in}
\setlength{\oddsidemargin}{0.3in}
\setlength{\evensidemargin}{0.3in}
\setlength{\textwidth}{6.3in}
\setlength{\rightmargin}{0.7in}
\setlength{\leftmargin}{-0.5in}
\setlength{\textheight}{9.1in}

\begin{document}

\newcommand{\SL}{\mathcal L^{1,p}( D)}
\newcommand{\Lp}{L^p( Dega)}
\newcommand{\CO}{C^\infty_0( \Omega)}
\newcommand{\Rn}{\mathbb R^n}
\newcommand{\Rm}{\mathbb R^m}
\newcommand{\R}{\mathbb R}
\newcommand{\Om}{\Omega}
\newcommand{\Hn}{\mathbb H^n}
\newcommand{\aB}{\alpha B}
\newcommand{\eps}{\ve}
\newcommand{\BVX}{BV_X(\Omega)}
\newcommand{\p}{\partial}
\newcommand{\IO}{\int_\Omega}
\newcommand{\bG}{\boldsymbol{G}}
\newcommand{\bg}{\mathfrak g}
\newcommand{\bz}{\mathfrak z}
\newcommand{\bv}{\mathfrak v}
\newcommand{\Bux}{\mbox{Box}}
\newcommand{\e}{\ve}
\newcommand{\X}{\mathcal X}
\newcommand{\Y}{\mathcal Y}
\newcommand{\W}{\mathcal W}
\newcommand{\la}{\lambda}
\newcommand{\vf}{\varphi}
\newcommand{\rhh}{|\nabla_H \rho|}
\newcommand{\Ba}{\mathcal{B}_\beta}
\newcommand{\Za}{Z_\beta}
\newcommand{\ra}{\rho_\beta}
\newcommand{\na}{\nabla_\beta}
\newcommand{\vt}{\vartheta}

\numberwithin{equation}{section}

\newcommand{\RN} {\mathbb{R}^N}
\newcommand{\Sob}{S^{1,p}(\Omega)}
\newcommand{\Dxk}{\frac{\partial}{\partial x_k}}
\newcommand{\Co}{C^\infty_0(\Omega)}
\newcommand{\Je}{J_\ve}
\newcommand{\beq}{\begin{equation}}
\newcommand{\bea}[1]{\begin{array}{#1} }
\newcommand{\eeq}{ \end{equation}}
\newcommand{\ea}{ \end{array}}
\newcommand{\eh}{\ve h}
\newcommand{\Dxi}{\frac{\partial}{\partial x_{i}}}
\newcommand{\Dyi}{\frac{\partial}{\partial y_{i}}}
\newcommand{\Dt}{\frac{\partial}{\partial t}}
\newcommand{\aBa}{(\alpha+1)B}
\newcommand{\GF}{\psi^{1+\frac{1}{2\alpha}}}
\newcommand{\GS}{\psi^{\frac12}}
\newcommand{\HFF}{\frac{\psi}{\rho}}
\newcommand{\HSS}{\frac{\psi}{\rho}}
\newcommand{\HFS}{\rho\psi^{\frac12-\frac{1}{2\alpha}}}
\newcommand{\HSF}{\frac{\psi^{\frac32+\frac{1}{2\alpha}}}{\rho}}
\newcommand{\AF}{\rho}
\newcommand{\AR}{\rho{\psi}^{\frac{1}{2}+\frac{1}{2\alpha}}}
\newcommand{\PF}{\alpha\frac{\psi}{|x|}}
\newcommand{\PS}{\alpha\frac{\psi}{\rho}}
\newcommand{\ds}{\displaystyle}
\newcommand{\Zt}{{\mathcal Z}^{t}}
\newcommand{\XPSI}{2\alpha\psi \begin{pmatrix} \frac{x}{|x|^2}\\ 0 \end{pmatrix} - 2\alpha\frac{{\psi}^2}{\rho^2}\begin{pmatrix} x \\ (\alpha +1)|x|^{-\alpha}y \end{pmatrix}}
\newcommand{\Z}{ \begin{pmatrix} x \\ (\alpha + 1)|x|^{-\alpha}y \end{pmatrix} }
\newcommand{\ZZ}{ \begin{pmatrix} xx^{t} & (\alpha + 1)|x|^{-\alpha}x y^{t}\\
     (\alpha + 1)|x|^{-\alpha}x^{t} y &   (\alpha + 1)^2  |x|^{-2\alpha}yy^{t}\end{pmatrix}}
\newcommand{\norm}[1]{\lVert#1 \rVert}
\newcommand{\ve}{\varepsilon}

\title[Quantitative uniqueness for  for zero-order perturbations, etc.]{Quantitative uniqueness for zero-order perturbations of generalized Baouendi-Grushin operators}

\author{Agnid Banerjee}
\address{Department of Mathematics\\University of California, Irvine\\
CA 92697} \email[Agnid Banerjee]{agnidban@gmail.com}

\author{Nicola Garofalo}
\address{Dipartimento di Ingegneria Civile, Edile e Ambientale (DICEA) \\ Universit\`a di Padova\\ 35131 Padova, ITALY}
\email[Nicola Garofalo]{rembdrandt54@gmail.com}

\thanks{Second author supported in part by a grant ``Progetti d'Ateneo, 2014,'' University of Padova.}

\dedicatory{Dedicated to Giovanni Alessandrini, on his $60$-th birthday, with great affection and admiration}

%
%
%
\keywords{}
\subjclass{}

\maketitle
\begin{abstract}
Based on a variant of the frequency function approach of  Almgren, we  establish  an optimal  bound on the  vanishing order of solutions to stationary Schr\"odinger equations associated to a class of subelliptic equations with variable coefficients whose model is the so-called Baouendi-Grushin operator. Such bound provides a quantitative form of strong unique continuation that can be thought of as an analogue of the recent results of Bakri and Zhu for the standard Laplacian.
\end{abstract}

\section{Introduction}\label{S:intro}

In this note we study  quantitative uniqueness  for zero-order perturbations of variable coefficient subelliptic equations  
whose ``constant coefficient" model is the
so called Baouendi-Grushin operator.  Precisely, in $\R^N$, with $N=m+k$, we analyze equations of the form
\begin{equation}\label{e0}
\sum_{i=1}^N X_i (a_{ij}(z,t) X_j u)= V(z,t) u,
\end{equation}
where  $z\in \R^m$, $t\in \R^k$, and the vector fields $X_1,...,X_N$ are given by
\begin{align}\label{df}
& X_i= \partial_{z_i},\ \ \  i=1, ...m,\ \ \ \  \ \ \ \ X_{m+j}= |z|^{\beta} \partial_{t_j},\ \ \   j=1, ...k,\ \ \beta>0.
\end{align}
 Besides ellipticity, the $N\times N$ matrix-valued function $A(z,t) = [a_{ij}(z,t)]$ is requested to satisfy certain structural  hypothesis that will be specified in \eqref{H} in Section \ref{pre} below. These assumptions reduce to the standard Lipschitz continuity when the dimension $k = 0$, or the parameter $\beta \to 0$. The assumptions on the potential function $V(z,t)$ are specified in \eqref{vasump} below. They represent the counterpart, with respect to the non-isotropic dilations associated with the vector fields $X_1,...,X_N$, of the requirements
 \begin{equation}\label{Vu}
 |V(x)|\le M,\ \ \ \ \ \ \ |<x,DV(x)>| \le M,
 \end{equation}
for the classical Schr\"odinger equation $\Delta u = V u$ in $\Rn$. To put this paper in the proper historical perspective we recall that for this operator, and under the hypothesis \eqref{Vu}, quantitative unique continuation results akin to our have been recently obtained  in \cite{Bk}, by Carleman estimates, and in \cite{Zhu1}, by means of a variant of Almgren's frequency function introduced in \cite{Ku2}. In these papers the authors established sharp estimates on the order of vanishing of solution to Schr\"odinger equations which generalized those in \cite{DF1} and \cite{DF2} for eigenvalues of the Laplacian on a compact manifold. Our results should be seen as a generalization of those in \cite{Bk} and \cite{Zhu1} to subelliptic equations such as \eqref{e0} above. As the reader will realize such generalization is made possible by the combination of several quite non-trivial geometric facts that beautifully combine. Some of these facts are based on the previous work \cite{GV}. We also mention that  the frequency  approach in \cite{Ku2} and \cite{Zhu1} has been recently extended in \cite{BG} to obtain sharp quantitative estimates at the boundary of Dini domains for more general elliptic equations with Lipschitz principal part.
 
 When in \eqref{e0} we take $[a_{ij}]= I_N$, the identity matrix in $\R^N$, then the operator in the left-hand side of \eqref{e0} reduces  to the well known  Baouendi-Grushin operator
\begin{equation}\label{pbeta}
\mathcal B_{\beta} u= \sum_{i=1}^N X_i^2 u = \Delta_z u + |z|^{2\beta} \Delta_t u,
\end{equation}
which is degenerate elliptic along the $k$-dimensional subspace $M = \{0\}\times \R^k$.
We observe that $\mathcal B_\beta$ is not translation invariant in $\R^N$. However, it is invariant with respect to the translations along $M$.
When $\beta = 1$ the operator $\mathcal B_\beta$ is intimately connected to the sub-Laplacians in groups of Heisenberg type. In such Lie groups, in fact, in the exponential coordinates with respect to a fixed orthonormal basis of the Lie algebra the sub-Laplacian  is given by
\begin{equation}\label{slH}
\Delta_H = \Delta_z + \frac{|z|^2}{4} \Delta_t  + \sum_{\ell = 1}^k \p_{t_\ell} \sum_{i<j} b^\ell_{ij} (z_i \p_{z_j} -  z_j \p_{z_i}),
\end{equation}
where $b^\ell_{ij}$ indicate the group constants. If $u$ is a solution of $\Delta_H$ that further annihilates the symplectic vector field $\sum_{\ell = 1}^k \p_{t_\ell} \sum_{i<j} b^\ell_{ij} (z_i \p_{z_j} -  z_j \p_{z_i})$, then we see that, in particular, $u$ solves (up to a normalization factor of $4$) the operator $\mathcal B_\beta$ obtained by letting $\beta = 1$ in \eqref{pbeta} above. 

We recall that a more general class of operators modeled on $\Ba$ was first introduced by Baouendi, who studied the Dirichlet problem in weighted Sobolev spaces in \cite{Ba}. Subsequently, Grushin in \cite{Gr1}, \cite{Gr2} studied the hypoelliptcity of the operator $\Ba$ when $\beta \in \mathbb{N}$, and showed that this property is drastically affected  by addition of lower order terms.

In the paper \cite{G} the first named author introduced a frequency function associated with $\mathcal B_\beta$, and proved that such frequency is monotone nondecreasing on solutions of $\mathcal B_\beta u = 0$. Such result, which generalized Almgren's in \cite{Al}, was used to establish the strong unique continuation property for $\Ba$. The results in \cite{G} were extended to more general equations of the form \eqref{e0} by the second named author and Vassilev in \cite{GV}, following the circle of ideas in the works \cite{GL1}, \cite{GL2}.  We mention that a version of the Almgren type monotonicity formula for $\mathcal B_\beta$ played an extensive role also in the recent work \cite{CSS} on the obstacle problem for the fractional Laplacian. Remarkably, the operator $\Ba$ also played an important role in the recent work \cite{KPS} on the higher regularity of the free boundary in the classical Signorini problem. 

We can now state our main result.

\begin{thrm}\label{main}
Let $u$ be a solution to \eqref{e0} in $B_{10}$ such that $(a_{ij})$ satisfy \eqref{H} and $V$ satisfy \eqref{vasump} below. We furthermore assume that $X_{i}X_{j}u \in L^{2}_{loc}(B_{10})$ and  $|u| \leq C_0$.  Then, there exist a universal  $a\in (0,1/3)$,  depending only on $R_1, \Lambda$ in \eqref{H},  and constants $C_1,C_2$ depending on $m, k, \beta, \la, \Lambda,  C_0$ and $\int_{B_{\frac{R_1}{3}}}  u^2 \psi$, such that for all $0<r< a R_1$ one has
\begin{equation}\label{main1}
||u||_{L^{\infty}(B_r)} \geq C_1 \left(\frac{r}{R_1}\right)^{C_2 \sqrt{K}}.   
\end{equation}
\end{thrm}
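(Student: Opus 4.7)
The plan is to carry out in the Grushin-type setting of \eqref{e0} the variant of Almgren's frequency function approach developed by Kukavica \cite{Ku2} and Zhu \cite{Zhu1} for the Euclidean Schr\"odinger equation $\Delta u = Vu$, exploiting the subelliptic groundwork laid in \cite{G} and \cite{GV}. Relative to the unperturbed case treated there, the new feature is that $V$ will force the frequency to be only \emph{almost} monotone, with a correction of size $\sqrt{K}$; this $\sqrt{K}$ is the source of the exponent $C_2\sqrt{K}$ in \eqref{main1}.

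First I would introduce the Grushin gauge $\rho$ and the angle function $\psi=|\na\rho|^2$ of \cite{G}, \cite{GV}, together with the height and energy functionals
\[
H(r)=\int_{\p B_r} u^2 \frac{\psi}{|\nabla\rho|}\, d\sigma, \qquad D(r)=\int_{B_r} \langle A\na u,\na u\rangle\,\psi\, dz\, dt,
\]
and the associated frequency $N(r)=rD(r)/H(r)$. Differentiating $H(r)$ and $D(r)$ and applying the variable-coefficient Rellich/Pohozaev identities of \cite{GV}---obtained by multiplying \eqref{e0} by the intrinsic radial derivative $\Za u$---would produce a differential inequality for $\log H$ and $N$. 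The hypothesis \eqref{vasump} on $\langle \Za, DV\rangle$ is used precisely at the stage where the cross term $\int_{B_r} Vu\,\Za u\,\psi$ is controlled: integrating by parts and applying Cauchy--Schwarz gives a bound proportional to $\sqrt{K}$ rather than $K$. After an absorption argument, this shows that the perturbed frequency $N(r)+C(1+\sqrt{K})$ is nondecreasing on $(0,aR_1)$, for some universal $a\in(0,1/3)$ depending only on $R_1$ and $\Lambda$.

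Second, I would convert almost-monotonicity into a doubling inequality. The bound $|u|\leq C_0$ on top, combined with the quantitative non-degeneracy $\int_{B_{R_1/3}} u^2\psi$ on the bottom, permits comparing $H$ at the scales $R_1/3$ and $R_1/6$, yielding $N(R_1/3)\leq C+C\sqrt{K}+C\log\!\big(C_0^2/\!\int_{B_{R_1/3}} u^2\psi\big)$. Integrating $(\log H)'=2N/r$ between $r$ and $2r$ then produces
\[
\int_{B_{2r}} u^2\psi \;\leq\; e^{C(1+\sqrt{K})}\int_{B_r} u^2\psi \qquad (0<r<aR_1),
\]
and iterating from $R_1/3$ down to $r$ yields $\int_{B_r} u^2\psi \geq C_1 (r/R_1)^{C_2\sqrt{K}}$ with the claimed dependence of constants.

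Finally, to pass from the weighted $L^2$ lower bound to the $L^\infty$ estimate \eqref{main1}, I would invoke a subelliptic Caccioppoli/Moser-type bound for solutions of \eqref{e0}, whose applicability is guaranteed by the hypothesis $X_iX_ju\in L^2_{loc}$; since the $\psi$-weighted volume of $B_r$ is a power of $r$ depending only on $m,k,\beta$, this converts the $L^2$ lower bound to the pointwise conclusion. The main obstacle is the almost-monotonicity step: in the Euclidean setting the $\sqrt{K}$ dependence rests on a delicate Cauchy--Schwarz applied to the Pohozaev cross term, and here one must verify that neither the variable-coefficient errors permitted by \eqref{H} nor the degeneracy of $\rho$ along $M=\{0\}\times\R^k$ destroy this scaling. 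The analysis of \cite{GV} handles the variable coefficients in the unperturbed case; here it must be married to the potential-term analysis of \cite{Zhu1}.
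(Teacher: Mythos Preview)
Your outline has a genuine gap at exactly the point you flag as ``the main obstacle.'' With the \emph{standard} Almgren frequency you write down (surface-integral height $H(r)=\int_{\partial B_r}u^2\psi/|\nabla\rho|$ and solid energy), the potential term in the Rellich identity contributes an error of order $K$, not $\sqrt{K}$: after integrating the cross term $\int_{B_r} Vu\,Zu$ by parts, what remains is $\int_{B_r}(ZV+c\,V)u^2$, which under \eqref{vasump} is bounded by $K\int_{B_r}u^2\psi$. There is no Cauchy--Schwarz step here that miraculously turns $K$ into $\sqrt{K}$; the analogous argument in \cite{GL1}, \cite{GL2} and in the unperturbed Grushin analysis of \cite{GV} yields almost-monotonicity of $N(r)+CK$ and hence a vanishing-order bound $CK$, which is weaker than \eqref{main1}.

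The mechanism that actually produces $\sqrt{K}$ in \cite{Ku2}, \cite{Zhu1}, and in this paper, is different and is precisely what you have omitted: one replaces the surface integrals by \emph{solid} integrals carrying a free weight $(r^2-\rho^2)^\alpha$, namely
\[
H(r)=\int_{B_r}u^2(r^2-\rho^2)^\alpha\mu,\qquad I(r)=\int_{B_r}\bigl(\langle AXu,Xu\rangle+Vu^2\bigr)(r^2-\rho^2)^{\alpha+1}.
\]
The first variation of $H$ then has leading term $(2\alpha+Q)/r$, and the monotonicity (Theorem~\ref{T:mono}) is proved with correction $C_2Kr^2$, not $\sqrt{K}$. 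The $\sqrt{K}$ only appears later, in the three-ball inequality: integrating $(\log H)'$ produces competing terms of size $2\alpha+Q$ and $K/(\alpha+1)$, and the optimal choice $\alpha=\sqrt{K}$ balances them. You cite \cite{Ku2} and \cite{Zhu1} but do not implement this $\alpha$-weight; without it your scheme gives $C_2K$ in the exponent, not $C_2\sqrt{K}$. (Two smaller points: your energy should not carry an extra $\psi$, and the final passage from the weighted $L^2$ bound to $L^\infty$ needs no Moser iteration --- it follows from $h(r)\le \|u\|_{L^\infty(B_r)}^2\int_{B_r}\mu$ together with $\int_{B_r}\mu\le Cr^Q$.)
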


It is worth emphasizing that, when $k=0$, we have $N = m$ and then \eqref{psi} below gives $\psi \equiv 1$. In such a case the constant $K$ in \eqref{vasump} below can be taken to be  $  C(||V||_{W^{1,\infty}}+1)$ for some universal $C$. We thus see that Theorem \ref{main}, when $A \equiv I_N$, reduces to the cited Euclidean result in \cite{Bk} and \cite{Zhu1}.  Therefore, Theorem \ref{main} can be thought of as a subelliptic generalization of this sharp quantitative uniqueness result for the standard Laplacian. We also would like to mention that, to the best of our knowledge, Theorem \ref{main} is new even for $\Ba u=Vu$ where $\Ba$ is as in \eqref{pbeta}.

The present paper is organized as follows. In Section \ref{pre} we introduce the  basic notations and gather some crucial preliminary results from \cite{G} and \cite{GV}. In Section \ref{S:mono} we establish a monotonicity theorem for a generalized frequency. Such result plays a central role in this paper. In Section \ref{S:prf}, we finally prove our main result, Theorem \ref{main} above.

\section{Notations and preliminary results}\label{pre}

Henceforth in this paper we follow the notations adopted in \cite{G} and \cite{GV}, with one notable proviso: the parameter $\beta>0$ in \eqref{df}, \eqref{pbeta}, etc. in this paper plays the role of $\alpha >0$ in \cite{G} and \cite{GV}. The reason for this is that we have reserved the greek letter $\alpha$ for the powers of the weight $(r^2 - \rho)^\alpha$ in definitions \eqref{h1}, \eqref{II} and \eqref{N} below.  Let $\{X_i\}$ for $i=1, ...N$ be defined as in \eqref{df}.  We denote an arbitrary point in $\RN$ as  $(z,t) \in \R^m \times \R^k$. Given a function $f$, we denote
\begin{equation}
Xf= (X_1f, .....X_Nf),\ \ \ \ \ \ \ \ \ |Xf|^2= \sum_{i=1}^N (X_i f)^2,
\end{equation}
respectively the intrinsic gradient and the square of its length. We recall from \cite{G} that the following family of anisotropic dilations are associated with the vector fields in \eqref{df} 
\begin{equation}\label{dil}
\delta_a(z,t)=(a z,a^{\beta+1} t),\ \ \ \ \ \ \ \ a>0.
\end{equation}
Let 
\begin{equation}
Q= m + (\beta+1) k.
\end{equation}
Since denoting by $dzdt$ Lebesgue measure in $\R^N$ we have $d(\delta_a(z,t)) = a^Q dz dt$, the number $Q$ plays the role of a dimension in the analysis of the operator
$\Ba$. For instance, one has the following remarkable fact (see \cite{G}) that the fundamental solution $\Gamma$ of $\Ba$  with pole at the origin is given by the formula
\[
\Gamma(z,t) = \frac{C}{\rho(z,t)^{Q-2}},\ \ \ \ \ \ \ \ \ (z,t)\not= (0,0),
\]
where $\rho$ is the pseudo-gauge 
\begin{equation}\label{rho}
\rho(z,t)=(|z|^{2(\beta+1)} + (\beta+1)^2 |t|^2)^{\frac{1}{2(\beta+1)}}.
\end{equation}
We respectively denote by 
\[
B_r = \{(z,t)\in \R^N\mid \rho(z,t) < r\},\ \ \ \ \ \ \ \ S_r = \{(z,t)\in \R^N\mid \rho(z,t) = r\}, 
\]
the gauge pseudo-ball and sphere centered at $0$ with radius $r$. The infinitesimal generator of the family of dilations \eqref{dil}  is given by the vector field
\begin{equation}\label{Z}
Z= \sum_{i=1}^m z_i \partial_{z_i} + (\beta+1)\sum_{j=1}^k t_j \partial_{y_j}.
\end{equation}
We note the important facts that
\begin{equation}\label{divZ}
\operatorname{div} Z = Q,\ \ \ \ \ \ \ \ \ \ \ \ [X_i,Z] = X_i,\ \ \ i=1,...,N.
\end{equation}
A  function $v$ is $\delta_a$-homogeneous of degree $\kappa$ if and only if $Zv=\kappa v$. Since $\rho$ in \eqref{rho} is homogeneous of degree one, we have
\begin{equation}\label{hg}
Z\rho=\rho.
\end{equation}
We also need the angle function $\psi$ introduced in \cite{G}
\begin{equation}\label{psi}
\psi = |X\rho|^2= \frac{|z|^{2\beta}}{\rho^{2\beta}}.
\end{equation}
The function $\psi$ vanishes on the characteristic manifold $M=\Rn \times \{0\}$ and clearly satisfies $0\leq \psi \leq 1$. Since $\psi$ is homogeneous of degree zero with respect to \eqref{dil}, one has
\begin{equation}\label{Zpsi}
 Z\psi = 0.
 \end{equation}

A first basic assumption on the matrix-valued function $A=[a_{ij}]$ is that it be symmetric and  uniformly elliptic. I.e., $a_{ij} = a_{ji}$, $i,j=1,...,N$, and there exists $\lambda > 0$ such that for every $(z,t)\in \R^N$ and $\eta\in \R^N$ one has
\begin{equation}\label{ue}
\lambda|\eta|^2 \leq<A(z,t)\eta, \eta> \leq \lambda^{-1} |\eta|^2.
\end{equation}
On the potential $V$ we preliminarily assume that $V\in L^\infty_{loc}(\R^N)$.
With these hypothesis in place we can introduce the notion of weak solution of \eqref{e0}.
\begin{dfn}\label{D:ws}
A weak solution to \eqref{e0} in an open set $\Om\subset \R^N$  is a function $u \in L^{2}_{loc}(\Om)$ such that the distributional horizontal gradient $Xu \in L^{2}_{loc}(\Om)$, and for which the following equality holds for all $\vf \in C^{\infty}_{0}(\Om)$
\begin{equation}\label{equation}
\int_{\Om} <AXu, X\vf> = \int_\Om V u \vf. 
\end{equation}
\end{dfn}
We note that when $A \equiv I_N$, and for a class of vector fields which are modeled on \eqref{df} above, in the pioneering paper \cite{FL} it was proved that a weak solution $u$ to \eqref{e0} is locally H\"older continuous in $\Om$ with respect to the control metric associated with the vector fields \eqref{df}.  In particular, it is continuous with respect to the Euclidean topology of $\R^N$. For the general situation of \eqref{equation} the local H\"older continuity of weak solutions can be proved essentially following \cite{FL}, but see also \cite{FGW} where such result is discussed for more general equations in the case in which $V=0$ in \eqref{equation} above. In this paper, however, all we need is the local boundedness of weak solutions of \eqref{equation}, and we do assume it a priori in Theorem \ref{main} above, so we do not need to derive it.

Throughout the paper we assume that 
\begin{equation}\label{A0}
A(0,0) = I_N,
\end{equation}
where $I_N$ indicates the identity matrix in $\R^N$.
In order to state our main assumptions (H) on the matrix $A$ it
will be useful to represent  the latter in the following block
form
\begin{equation*}
A= \begin{pmatrix} A_{11} & A_{12} \\ A_{21} & A_{22}
\end{pmatrix},
\end{equation*}
Here, the entries are respectively $ m\times
m,\thickspace m\times k,\thickspace k\times m$ and $k\times k $
matrices, and we assume that $A^t_{12}=A_{21}$.  We shall denote
by $B$ the matrix
\[
B = A - I_N,
\]
and thus
\begin{equation}\label{Bat0}
B(0,0) = O_N,
\end{equation}
thanks to \eqref{A0}.  The proof of Theorem \ref{main} relies crucially on the following assumptions on the
matrix $A$. These will be our main hypothesis and, without further mention, will be assumed to hold throughout
the paper.

\begin{hyp}
There exists a positive constant $\Lambda$ such that, for some $R_1>0$, one has in $B_{R_1}$ the
following estimates
\begin{equation*}
|b_{ij}| = |a_{ij} - \delta_{ij}|\ \leq\
\begin{cases}
\Lambda\rho, \hskip1.4truein \text{ for }\ 1\leq i,\, j\leq m,\\
\\
\Lambda \psi^{\frac12+\frac1{2\beta}}\rho\, =\, \Lambda\frac{|z|^{\beta+1}}{\rho^\beta},\quad
 \text{otherwise},
\end{cases}
\end{equation*}
\begin{equation}\label{H}
\tag{H}
\end{equation}
\begin{equation*}
|X_kb_{ij}| = |X_ka_{ij}|\ \leq\
\begin{cases}
\Lambda, \hskip1.1truein \text{ for }\quad  1\leq k\leq m ,\ \text{ and }\ 1\leq i,\,j\leq m,\\
\\
 \Lambda \GS\, =\, \Lambda \frac{|z|^{\beta}}{\rho^\beta},\quad
 \text{otherwise}.
\end{cases}
\end{equation*}
\end{hyp}

\begin{rmrk}
We note that in the situation when $k=0$  the above hypothesis coincide with the usual Lipschitz continuity at the origin of the coefficients $a_{ij}$. 
\end{rmrk}

Now we assume that  $V$ in \eqref{e0} satisfy the following hypothesis for some $K\ge 0$
\begin{equation}\label{vasump}
|V| \leq K \psi,\ \ \ \ \ \ |FV| \leq K \psi,
\end{equation}
where $\psi$ indicates the function introduced in \eqref{psi} above and $F$ is defined as in \eqref{F}. Without loss of generality we assume henceforth that $K\ge 1$. 


We next collect  several preliminary results established in \cite{GV}  that will be important in the proof of Theorem \ref{main}. We consider the quantity
\begin{equation}\label{mu}
\mu= <AX\rho, X\rho>.
\end{equation}
We note that, by the uniform ellipticity \eqref{ue} of $A$, the function $\mu$ is comparable to $\psi$ defined in \eqref{psi}, in the sense that
\begin{equation}\label{mu2}
\la \psi\le \mu \le \la^{-1} \psi.
\end{equation}
By \eqref{mu2} it is clear that, similarly to $\psi$, the function $\mu$ vanishes on the characteristic manifold $M = \{(0,t)\in \R^N\mid t\in \R^k\}$. The following vector field $F$ introduced in \cite{GV} will play an important role in this paper:
\begin{equation}\label{F}
F= \frac{\rho}{\mu} \sum_{i,j=1}^N a_{ij} X_i\rho  X_j.
\end{equation}
It is clear that $F$ is singular on $M$. However, using \eqref{fz} below and the assumptions \eqref{H} on the matrix $A$, it was shown in \cite{GV} that $F$ can be extended to all of $\R^N$ to a continuous vector field that, near the
characteristic manifold $M$, gives a small perturbation of the Euler vector
field $Z$ in \eqref{Z} above, but see also the Remark \ref{R:FZ} below.
We note from \eqref{F} that
\begin{equation}\label{Fr}
F \rho = \rho.
\end{equation}
More in general, the action of $F$ on a function $u$ is given by
\begin{equation}\label{Fu}
Fu=\frac{\rho}{\mu}<AX\rho, Xu>.
\end{equation}
We also let
\begin{equation}\label{sigma}
\sigma= <BX\rho, X\rho>= \mu - \psi.
\end{equation}
As in (2.13) in \cite{GV}, $F$ can be represented in the following way
\begin{equation}\label{fz}
F= Z- \frac{\sigma}{\mu}Z + \frac{\rho}{\mu} \sum_{i,j=1}^N b_{ij} X_i\rho X_j.
\end{equation}

\begin{rmrk}\label{R:FZ}
We emphasize that when $A(z,t) \equiv I_N$, then $B(z,t) \equiv 0_N$. In such case we immediately see from \eqref{fz} that $F \equiv Z$.
\end{rmrk}

Henceforth, for any two vector fields $U$ and $W$, $[U,W] = UW - WU$ denotes their commutator. In the next theorem we collect several important estimates that have been established in \cite{G} and \cite{GV}. 

\begin{thrm}\label{Est}
There exists a constant $C(\beta,\lambda,\Lambda,N)>0$ such that for any function $u$ one has:
\begin{itemize}
\item[(i)] $|Q- \operatorname{div} F| \leq C\rho$;
\item[(ii)] $|F\mu| \leq C \rho \psi$;
\item[(iii)] $\operatorname{div} (\frac{\sigma Z}{\mu}) \leq C \rho$;
\item[(iv)] $|X_{i}\rho|\leq \psi^{1+\frac{1}{2\beta}}, \ \ i=1,...,m,\ \ \ \ |X_{m+j} \rho| \leq (\beta+1)\rho^{1/2},\ \  j=1,...,k$;
\item[(v)] $ |F-Z| \leq C \rho^2$;
\item[(vi)] $|<FAXu, Xu>| \leq C \rho |Xu|^2$;
\item[(vii)] $|[X_i,F]u -X_iu| \leq C \rho |Xu|$,\ \ \ \ $i=1,...,N$;
\item[(viii)] $|\sigma| \leq C \rho \psi^{3/2+ \frac{1}{2\beta}}\ |X\sigma| \leq C\psi^{3/2}$;
\item[(ix)] $|\frac{b_{ij} X_j\rho X_i}{\mu}| \leq C|z|$;
\item[(x)] $|X_i\psi|\leq \frac{C\beta\psi}{|z|}, i=1,...,m,\ \ \ \ |X_{n+j} \psi| \leq \frac{C\beta\psi}{\rho}, j=1, ..., k$;
\item[(xi)] $|\frac{\sigma}{\mu}| \leq C \rho \psi,\  |Z\sigma| \leq C \rho \psi,\ |X_k\sigma| \leq C \psi^{3/2}$;
\item[(xii)] $|[X_i, -\frac{\sigma Z}{\mu}]u| \leq C \rho |Xu|$,\ \ \ \  \ (Lemma 2.7 in \cite{GV});
\item[(xiii)] $|[X_\ell, \frac{\rho}{\mu} \sum_{i,j=1}^N \frac{b_{ij} X_j\rho} X_i]u| \leq C \rho |Xu|$,\ \ $\ell = 1,...,N$.
\end{itemize}
\end{thrm}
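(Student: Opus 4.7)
Theorem \ref{Est} assembles thirteen inequalities whose derivations are distributed across \cite{G} (treating the constant-coefficient model $A \equiv I_N$) and \cite{GV} (incorporating the general matrix $A$ satisfying (H)). My plan is not to reprove each item in isolation, but to organize the list into four groups, each with a single unifying computation; this reflects how the material is in fact organized in the references.

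The first group consists of items (iv) and (x), which are purely a matter of Grushin geometry and require no information about $A$. Differentiating the identity $\rho^{2(\beta+1)} = |z|^{2(\beta+1)} + (\beta+1)^2|t|^2$ yields closed forms for $\partial_{z_i}\rho$ and $\partial_{t_j}\rho$; substituting these into $X_i = \partial_{z_i}$ and $X_{m+j} = |z|^\beta \partial_{t_j}$ and using $|t| \leq \rho^{\beta+1}/(\beta+1)$ together with the definition $\psi = (|z|/\rho)^{2\beta}$ yields (iv). Direct differentiation of $\psi$ combined with (iv) then gives (x).

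The second group, items (viii), (ix) and (xi), concerns the scalar $\sigma = \langle BX\rho, X\rho\rangle = \mu - \psi$. Since $B(0,0) = 0$ by \eqref{Bat0}, the decay of $\sigma$ is obtained by pairing the coefficient bounds of (H) (the pure-Euclidean block of $B$ decays as $\Lambda\rho$, while the mixed and pure-$t$ blocks carry the additional weight $\psi^{\frac12+\frac1{2\beta}}$) against the matching decay of $X_i\rho$ provided by (iv). The gradient bound on $\sigma$ follows by the product rule, using the second half of (H) to control $X_kb_{ij}$. Dividing through by $\mu$ and invoking the comparison $\mu \geq \la\psi$ from \eqref{mu2} converts the $\sigma$-estimates into (ix) and (xi); for the homogeneity piece one also uses $Z\psi = 0$ from \eqref{Zpsi}, so that $Z\mu = Z\sigma$ and the $Z(\sigma/\mu)$-term inherits the same $\rho\psi$ decay.

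The remaining items (i), (ii), (iii), (v), (vi), (vii), (xii), (xiii) all revolve around the radial vector field $F$, and the central tool is the decomposition \eqref{fz}, which writes $F = Z + E_1 + E_2$ with $E_1 = -(\sigma/\mu)Z$ and $E_2 = (\rho/\mu)\sum_{i,j} b_{ij}X_i\rho\,X_j$. When $A \equiv I_N$ the corrections vanish by Remark \ref{R:FZ}, and (i), (v), (vi), (vii) collapse to the baseline identities $\operatorname{div} Z = Q$ and $[X_i, Z] = X_i$ from \eqref{divZ} together with $Z\rho = \rho$ from \eqref{hg}. For general $A$ one applies the relevant operation to $E_1$ and $E_2$ and estimates the result using the second group and (H); for instance, (v) follows from $|E_1| \leq C\rho^2$ (combining $|\sigma/\mu| \leq C\rho\psi$ from (xi) with $|Z| \leq C\rho$) and $|E_2| \leq C\rho^2$ (using (ix) and multiplying by $\rho$); (iii) follows from $\operatorname{div}((\sigma/\mu)Z) = Q(\sigma/\mu) + Z(\sigma/\mu)$ combined with (xi); and the commutator estimates (vii), (xii), (xiii) are obtained by expanding $[X_\ell,\cdot]$ on each correction field and invoking the baseline identity $[X_\ell, Z] = X_\ell$. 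The main obstacle across all four groups is the anisotropic bookkeeping: each block of $B$ in (H) has a specific order of vanishing in $\rho$ and $\psi$ that must be paired exactly with the matching block of $X\rho$ or $X\psi$ so that each estimate closes with the clean weight advertised. Once that matching is in place, each item reduces to a finite term-by-term computation, which is carried out in detail in Sections 2 of \cite{G} and \cite{GV}.
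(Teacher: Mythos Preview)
Your proposal is correct and aligned with the paper's treatment: the paper does not prove Theorem~\ref{Est} at all but simply cites \cite{G} and \cite{GV} as the source of each estimate, and your sketch likewise defers the full computations to those references while additionally supplying an accurate roadmap (the grouping into Grushin-geometric facts, $\sigma$-estimates, and $F=Z+E_1+E_2$ corrections via \eqref{fz}) that the paper itself does not spell out.
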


The properties expressed in (i) and (vii) should be compared with \eqref{divZ} above.


\section{Monotonicity of a generalized frequency}\label{S:mono}

Henceforth, we denote by $u$ a weak solution to \eqref{e0} in $B_{10}$. For the sake of brevity in all the integrals involved we will routinely omit the variable of integration $(z,t)\in \R^N$, as well as Lebesgue measure $dzdt$. 
When we say that a constant is universal, we mean that it depends exclusively on $m, k, \beta$, on the ellipticity bound $\lambda$ on $A(z,t)$, see \eqref{ue} above, and on the Lipschitz bound $\Lambda$ in \eqref{H}. Likewise, we will say that $O(1)$, $O(r)$, etc. are universal if $|O(1)| \le C$, $|O(r)|\le C r$, etc., with $C\ge 0$ universal.

For $0< r <R_1$, where $R_1$ is as in the hypotheses \eqref{H} above, we define the generalized \emph{height function} of $u$ in $B_r$ as follows
\begin{equation}\label{h1}
H(r)= \int_{B_r} u^2 (r^2-\rho^2)^{\alpha} \mu, 
\end{equation}
where $\rho$ is the pseudo-gauge in \eqref{rho} above, the function $\mu$ is defined in \eqref{mu}, and $\alpha > - 1$ is going to be fixed later (precisely, in passing from \eqref{warning2} to \eqref{warning4} below).  We also introduce the \emph{generalized energy} of $u$ in $B_r$
\begin{equation}\label{II}
I(r)=  \int_{B_r}  <AXu, Xu> (r^2-\rho^2)^{\alpha+1} + \int_{B_r}  Vu^2 (r^2-\rho^2)^{\alpha+1},
\end{equation}
where, besides \eqref{ue}, the $N\times N$ matrix-valued function $A(z,t)$ fulfills the requirements \eqref{H} above, whereas the potential $V(z,t)$ satisfies the hypothesis \eqref{vasump} above. We define
the \emph{generalized frequency} of $u$ as follows
\begin{equation}\label{N}
N(r) = \frac{I(r)}{H(r)}.
\end{equation}

The central result of this section is the following monotonicity result for the frequency $N(r)$.

\begin{thrm}\label{T:mono}
There exists $R_1>0$, depending only on $R_1$ and $\Lambda$ in \eqref{H}, such that the function
\[
r \to e^{C_1r}(N(r) + C_2 K r^2),
\]
is monotone non-decreasing on the interval $(0,R_1)$. Here, $C_1$ and $C_2$ are two universal nonnegative numbers.
\end{thrm}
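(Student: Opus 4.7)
The plan is to differentiate $H(r)$ and $I(r)$ separately, express both derivatives in terms of the auxiliary \textit{Dirichlet quantity} $D(r)=\int_{B_r}(Fu)^2\mu(r^2-\rho^2)^\alpha$, and close the loop by Cauchy--Schwarz. I choose $\alpha>-1$ suitably (to be fixed at the end of the argument) so that the weight $(r^2-\rho^2)^\alpha$ vanishes on $\partial B_r$, which renders every integration by parts against the vector field $F$ boundary-free. Differentiating under the integral sign gives $H'(r)=2\alpha r\int_{B_r}u^2(r^2-\rho^2)^{\alpha-1}\mu$; using the algebraic identity $r^2(r^2-\rho^2)^{\alpha-1}=(r^2-\rho^2)^\alpha+\rho^2(r^2-\rho^2)^{\alpha-1}$ together with $F[(r^2-\rho^2)^\alpha]=-2\alpha\rho^2(r^2-\rho^2)^{\alpha-1}$ (which follows from \eqref{Fr}), an integration by parts against $F$ combined with items (i), (ii) of Theorem \ref{Est} (to replace $\operatorname{div}F$ by $Q$ and to control $F\mu$ by $O(\rho)\mu$) yields
\[
H'(r)=\frac{Q+2\alpha+O(r)}{r}H(r)+\frac{2}{r}\int_{B_r}u\,\mu\,Fu\,(r^2-\rho^2)^\alpha.
\]

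Testing the weak form \eqref{equation} against $\vf=u(r^2-\rho^2)^{\alpha+1}$ and using $\rho<AXu,X\rho>=\mu Fu$ from \eqref{Fu} produces the basic energy identity $I(r)=2(\alpha+1)\int_{B_r}u\mu Fu(r^2-\rho^2)^\alpha$, so the above can be rewritten as $H'(r)=\frac{Q+2\alpha+O(r)}{r}H(r)+\frac{I(r)}{r(\alpha+1)}$. The derivative $I'(r)$ is then obtained by differentiating this energy identity in $r$, performing the same splitting, and integrating by parts against $F$; the critical residue is a term $\int u\mu F(Fu)(r^2-\rho^2)^\alpha$. This residue is eliminated by a Rellich-type identity obtained by testing \eqref{equation} against $\vf=Fu(r^2-\rho^2)^{\alpha+1}$: the boundary-type contribution produces $2(\alpha+1)D(r)$ via $\rho<AXu,X\rho>=\mu Fu$, and the bulk is reshaped using $X_j(Fu)=F(X_ju)+[X_j,F]u$ together with items (vi), (vii) of Theorem \ref{Est} and the symmetry of $A$, which combine to give
\[
<AXu,X(Fu)>=\tfrac12\,F<AXu,Xu>+<AXu,Xu>+O(\rho)|Xu|^2;
\]
one further integration by parts against $F$ on the first summand converts it into a multiple of $I(r)$, plus $O(r)I(r)$ errors from items (i), (iii). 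The potential enters through $\int V u Fu(r^2-\rho^2)^{\alpha+1}$, bounded by $CKr^2(H(r)+D(r))$ from $|V|\le K\psi$ and Cauchy--Schwarz, plus a piece controlled by $CKr^2 H(r)$ via $|FV|\le K\psi$ in \eqref{vasump}. Collecting all contributions yields
\[
I'(r)=\frac{Q+2\alpha+O(r)}{r}\,I(r)+\frac{4(\alpha+1)}{r}\,D(r)+E(r),
\]
with $|E(r)|\le CI(r)+CKr\,H(r)$.

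Applying Cauchy--Schwarz to the energy identity gives $I(r)^2\le 4(\alpha+1)^2 H(r)D(r)$. Substituting the formulas for $H'(r)$ and $I'(r)$ into $N'(r)=[I'(r)H(r)-I(r)H'(r)]/H(r)^2$, the $(Q+2\alpha)/r$ contributions cancel identically, the Cauchy--Schwarz deficit $H(r)D(r)-I(r)^2/[4(\alpha+1)^2]\ge 0$ supplies a non-negative main term, and the leftover errors reduce to
\[
N'(r)\ge -C_1 N(r)-2C_2 K r
\]
for universal nonnegative $C_1,C_2$. This differential inequality is exactly equivalent to $\frac{d}{dr}\bigl(e^{C_1 r}[N(r)+C_2 K r^2]\bigr)\ge 0$ on $(0,R_1)$, which is the claim.

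The main obstacle lies in the Rellich identity of the second paragraph. The vector field $F$ is only an $O(\rho^2)$ perturbation of the Euler field $Z$ (item (v) of Theorem \ref{Est}), the commutators $[X_j,F]u$ deviate from $X_ju$ by $O(\rho)|Xu|$ (item (vii)), and $A$ is non-constant but structured according to \eqref{H}; every one of the estimates (i)--(xiii) in Theorem \ref{Est} participates in the book-keeping of error terms. The non-isotropic size $\Lambda\psi^{1/2+1/(2\beta)}\rho$ of the off-diagonal block of $A$ in \eqref{H} is calibrated precisely to match the homogeneity dictated by $Z$ and $\rho$, which is what keeps all the errors of order $\rho$ rather than becoming non-integrable near the characteristic manifold $M=\{0\}\times\R^k$. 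Fixing $\alpha>-1$ sufficiently large also supplies the Hardy-type absorption needed to dominate the potential contributions by $Kr^2H(r)$ plus portions of $D(r)$ that can be absorbed into the main $D(r)$ term; this is precisely what forces the additive shift $C_2 Kr^2$ to soak up the non-monotone contribution of $V$, while the exponential multiplier $e^{C_1 r}$ absorbs the multiplicative error $C_1 N(r)$ coming from the variable-coefficient corrections.
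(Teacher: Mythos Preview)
Your overall strategy---the energy identity $I(r)=2(\alpha+1)\int u\,Fu\,\mu\,(r^2-\rho^2)^\alpha$, the first variation of $H$, a Rellich-type computation for $I'$, and the Cauchy--Schwarz closing---is exactly the paper's, and your displayed formulas for $H'(r)$, $I'(r)$ and the differential inequality $N'(r)\ge-C_1N(r)-2C_2Kr$ coincide with Lemmas~\ref{L:I}, \ref{L:H'}, \ref{L:fve} and the proof of Theorem~\ref{T:mono}. The commutator identity $<AXu,X(Fu)>=\tfrac12\,F<AXu,Xu>+<AXu,Xu>+O(\rho)|Xu|^2$ that you isolate is precisely what the paper extracts from items (vi) and (vii) of Theorem~\ref{Est}.

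There is, however, a genuine gap in your treatment of the potential cross-term $\int_{B_r}Vu\,Fu\,(r^2-\rho^2)^{\alpha+1}$. Bounding it by Cauchy--Schwarz as $CKr^2\bigl(H(r)+D(r)\bigr)$ injects an error $CKr\,D(r)$ into $I'(r)$; after dividing by $H(r)$ and subtracting $N(r)^2/[(\alpha+1)r]$ this leaves a residual of order $-Kr\,N(r)^2/(\alpha+1)^2$ in the lower bound for $N'(r)$, which is \emph{quadratic} in $N$ and cannot be absorbed into the linear form $-C_1N(r)-2C_2Kr$ with universal $C_1,C_2$. Your suggestion to fix $\alpha$ large does not salvage this: the constants in Theorem~\ref{T:mono} must be independent of both $\alpha$ and $K$ (the choice $\alpha=\sqrt K$ occurs only later, in Section~\ref{S:prf}, and relies precisely on this independence). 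The paper's remedy is to write $2Vu\,Fu=V\,F(u^2)$ and integrate by parts against $F$ (see \eqref{calc}); this produces terms involving $\operatorname{div}F$, $FV$, and $Vu^2\rho^2(r^2-\rho^2)^\alpha$, all bounded by $O(1)Kr\,H(r)$ with \emph{universal} $O(1)$ after the $\alpha$-dependent coefficients cancel exactly against the remaining potential terms in \eqref{II'}. This integration by parts is where the hypothesis $|FV|\le K\psi$ actually enters---you cite the hypothesis but do not invoke the mechanism that makes it effective, and no ``Hardy-type absorption'' via large $\alpha$ can substitute for it.
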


The proof of Theorem \ref{T:mono} will be divided into several steps.  
We begin by noting that although the gauge $\rho$ in \eqref{rho} above is not smooth at the origin, nevertheless all subsequent calculations can be  justified by integrating over the set  $B_r - B_{\ve}$, and then let $\ve \to 0$. Moreover, by standard  approximation type arguments as in \cite{GV} which crucially use the estimates in Theorem \ref{Est}, we can assume that all the computations  hereafter are classical. The initial step in the proof of Theorem \ref{T:mono}  is the following result that provides a crucial alternative representation of the generalized energy \eqref{II}.

\begin{lemma}\label{L:I}
For every $0<r<R_1$ one as
\begin{equation}\label{I}
I(r)= 2(\alpha+1) \int_{B_r} u Fu (r^2-\rho^2)^{\alpha} \mu.
\end{equation}
\end{lemma}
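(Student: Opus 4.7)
The plan is to obtain \eqref{I} by multiplying the equation \eqref{e0} by the test function $\varphi = u\,(r^2-\rho^2)^{\alpha+1}$ and integrating by parts over $B_r$. Since $\alpha+1>0$, the weight $(r^2-\rho^2)^{\alpha+1}$ vanishes on $\partial B_r = S_r$, so no boundary contribution will appear. I will use that each $X_i$ is formally skew-adjoint on $(\R^N,dz\,dt)$: for $1\le i\le m$ this is trivial, and for $i=m+j$ it follows because $|z|^\beta$ depends only on $z$ while $X_{m+j}$ differentiates in $t_j$, so $\operatorname{div}X_i=0$.

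Carrying this out, multiplying \eqref{e0} by $\varphi$ and integrating gives
\[
\int_{B_r} Vu^2(r^2-\rho^2)^{\alpha+1} \;=\; \int_{B_r}\varphi\,\sum_i X_i(a_{ij}X_ju)
\;=\; -\int_{B_r}\sum_{i,j}a_{ij}(X_ju)\,X_i\varphi.
\]
Expanding $X_i\varphi = (X_iu)(r^2-\rho^2)^{\alpha+1} + u\,X_i[(r^2-\rho^2)^{\alpha+1}]$, and computing
\[
X_i\bigl[(r^2-\rho^2)^{\alpha+1}\bigr] \;=\; -2(\alpha+1)\rho\,(X_i\rho)\,(r^2-\rho^2)^{\alpha},
\]
the right-hand side splits into
\[
-\!\int_{B_r}\!\langle AXu,Xu\rangle(r^2-\rho^2)^{\alpha+1}
\;+\; 2(\alpha+1)\!\int_{B_r}\!u\,\rho\,(r^2-\rho^2)^{\alpha}\!\sum_{i,j}a_{ij}(X_ju)(X_i\rho).
\]
Now the inner sum is precisely $\langle AX\rho,Xu\rangle$, which by the definition \eqref{F}--\eqref{Fu} of $F$ equals $\frac{\mu}{\rho}Fu$. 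Rearranging the resulting identity yields
\[
\int_{B_r}\!\langle AXu,Xu\rangle(r^2-\rho^2)^{\alpha+1}+\int_{B_r}Vu^2(r^2-\rho^2)^{\alpha+1}
=2(\alpha+1)\int_{B_r}u\,Fu\,(r^2-\rho^2)^{\alpha}\mu,
\]
which is exactly \eqref{I}.

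The main technical point, rather than an obstacle, is the justification of the integration by parts: the weight $(r^2-\rho^2)^{\alpha+1}$ is only H\"older-regular at $S_r$, the gauge $\rho$ is not smooth at the origin, and the test function $\varphi$ is not in $C_0^\infty$. This is handled, as the authors indicate in the paragraph preceding the lemma, by replacing $B_r$ with the annular region $B_r\setminus B_\varepsilon$, so that the boundary term on $S_\varepsilon$ tends to $0$ as $\varepsilon\to 0$ thanks to the $L^\infty$ and $W^{2,2}_{loc}$ regularity of $u$ assumed in Theorem \ref{main} together with the bounds on $|X_i\rho|$ from Theorem \ref{Est}(iv); a standard approximation of $\varphi$ by compactly supported smooth functions then makes all computations classical. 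Since the boundary term on $S_r$ already vanishes through the factor $(r^2-\rho^2)^{\alpha+1}$ with $\alpha>-1$, this is the only limiting passage required.
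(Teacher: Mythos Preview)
Your proof is correct and follows essentially the same approach as the paper: both arguments use the test function $u(r^2-\rho^2)^{\alpha+1}$, the divergence theorem (with vanishing boundary term since $\alpha+1>0$), the equation \eqref{e0}, and the identity $\rho\langle AX\rho,Xu\rangle=\mu\,Fu$ from \eqref{Fu}. The only cosmetic difference is that the paper starts from $2(\alpha+1)\int_{B_r}u\,Fu\,(r^2-\rho^2)^\alpha\mu$ and rewrites it as $-\int_{B_r}u\langle AXu,X(r^2-\rho^2)^{\alpha+1}\rangle$ before integrating by parts, whereas you begin from the weak formulation and identify the $Fu$ term at the end; the computations are identical.
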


\begin{proof}
Using the definition of $F$, the divergence theorem and \eqref{e0}, we find
\begin{align*}
& 2(\alpha+1) \int_{B_r} u Fu  (r^2-\rho^2)^{\alpha} \mu = - \int_{B_r} u <AXu,X(r^2-\rho^2)^{\alpha+1}>
\\
& =  \int_{B_r} <AXu, Xu> (r^2-\rho^2)^{\alpha+1} +  \int_{B_r} Vu^2 (r^2-\rho^2)^{\alpha+1},
\end{align*}
which proves \eqref{I} above.

\end{proof}

\begin{lemma}[First variation formula for $H(r)$]\label{L:H'}
There exists a universal $O(1)$ such that for every $r \in (0,R_1)$ one has 
\begin{equation}\label{hr1}
H'(r)= \frac{2\alpha + Q}{r} H(r) + O(1) H(r) + \frac{1}{(\alpha+1)r} I(r).
\end{equation}
\end{lemma}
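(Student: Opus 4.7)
The plan is to differentiate $H(r)$ directly in $r$ and then trade the singular factor $(r^2-\rho^2)^{\alpha-1}$ that appears for the benign factor $(r^2-\rho^2)^\alpha$ via an integration by parts against the radial-type vector field $F$. Formally differentiating under the integral sign (classical for $\alpha$ large; the general case $\alpha>-1$ is covered by the limiting procedure alluded to at the start of this section) yields
\begin{equation*}
H'(r)=2\alpha r\int_{B_r}u^2(r^2-\rho^2)^{\alpha-1}\mu.
\end{equation*}
The elementary algebraic identity $r^2=(r^2-\rho^2)+\rho^2$ then splits this as
\begin{equation*}
H'(r)=\frac{2\alpha}{r}H(r)+\frac{2\alpha}{r}\int_{B_r}u^2\rho^2(r^2-\rho^2)^{\alpha-1}\mu,
\end{equation*}
so the first summand already contributes $\tfrac{2\alpha}{r}H(r)$ toward the claim, and the task reduces to analyzing the second summand.

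For that second summand I would invoke $F\rho=\rho$ from \eqref{Fr} to observe
\begin{equation*}
-F\bigl[(r^2-\rho^2)^\alpha\bigr]=2\alpha\rho^2(r^2-\rho^2)^{\alpha-1},
\end{equation*}
so the integral becomes $\tfrac{1}{r}\int_{B_r}u^2\mu\,(-F)[(r^2-\rho^2)^\alpha]$. Writing $F=\sum_j c_jX_j$, each $X_j$ has vanishing Euclidean divergence, so the formal adjoint of $F$ takes the form $F^\ast=-F-\operatorname{div}F$; since $(r^2-\rho^2)^\alpha$ vanishes on $\partial B_r$ (for $\alpha>0$; general case by approximation), no boundary term appears and integrating by parts produces
\begin{equation*}
\int_{B_r}u^2\mu\,(-F)[(r^2-\rho^2)^\alpha]=\int_{B_r}(r^2-\rho^2)^\alpha F[u^2\mu]+\int_{B_r}u^2\mu(r^2-\rho^2)^\alpha\operatorname{div}F.
\end{equation*}

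It remains to identify the main and error pieces on the right. Item (i) of Theorem \ref{Est} gives $\operatorname{div}F=Q+O(\rho)$, so the second integral yields $QH(r)+O(r)H(r)$; divided by $r$ this supplies the $\tfrac{Q}{r}H(r)$ needed to complete the coefficient $\tfrac{2\alpha+Q}{r}$, together with an $O(1)H(r)$ error. Expanding $F[u^2\mu]=2u\mu\,Fu+u^2F\mu$ and using item (ii) of Theorem \ref{Est} with the comparability $\mu\asymp\psi$ of \eqref{mu2}, one has $|u^2F\mu|\le C\rho\,u^2\mu$, contributing a further $O(r)H(r)$ absorbed into $O(1)H(r)$ after division by $r$. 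Finally, Lemma \ref{L:I} identifies the principal piece $2\int_{B_r}u\,Fu(r^2-\rho^2)^\alpha\mu$ with $I(r)/(\alpha+1)$, producing the remaining term $\tfrac{1}{(\alpha+1)r}I(r)$ and completing \eqref{hr1}. The main obstacle will be the careful bookkeeping in the integration by parts together with the systematic tracking of $O(\rho)$ error terms: verifying that all the error integrals are truly bounded by $O(r)H(r)$ uniformly on $(0,R_1)$ is precisely where hypothesis \eqref{H} and the package of estimates gathered in Theorem \ref{Est} are indispensable.
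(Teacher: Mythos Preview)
Your proposal is correct and follows essentially the same route as the paper: differentiate $H$, split via $r^2=(r^2-\rho^2)+\rho^2$, rewrite the singular piece as $-F[(r^2-\rho^2)^\alpha]$ using $F\rho=\rho$, integrate by parts, and then invoke items (i) and (ii) of Theorem \ref{Est} together with Lemma \ref{L:I}. The only cosmetic difference is that the paper packages the integration by parts as $\int_{B_r}\operatorname{div}(\mu u^2 F)(r^2-\rho^2)^\alpha$ rather than via the adjoint formula $F^\ast=-F-\operatorname{div}F$, but the resulting terms and error estimates are identical.
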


\begin{proof}
Differentiating \eqref{h1}, and using the fact that $(r^2 - \rho^2)^\alpha$ vanishes on $S_r$, we find that
\[
H'(r)= 2\alpha r \int_{B_r}  u^2 (r^2 - \rho^2)^{\alpha-1} \mu. 
\]
Using the identity
\[
(r^2 - \rho^2)^{\alpha-1} = \frac{1}{r^2} (r^2 - \rho^2)^{\alpha} + \frac{\rho^2}{r^2}(r^2 - \rho^2)^{\alpha-1},
\]
the latter equation can be rewritten as 
\[
H'(r)= \frac{2 \alpha}{r} H(r) + \frac{2\alpha}{r}\int_{B_r} u^2 (r^2-\rho^2)^{\alpha-1}\rho^2\mu.
\]
Recalling \eqref{Fr}, we have
\[
H'(r)= \frac{2\alpha}{r} H(r) - \frac{1}{r} \int_{B_r} u^2  F(r^2-\rho^2)^{\alpha}\mu.
\]
Integrating by parts, we obtain
\begin{align*}
H'(r) & = \frac{2\alpha}{r} H(r) + \frac{1}{r} \int_{B_r} \operatorname{div}(\mu u^2 F) (r^2-\rho^2)^{\alpha}
\\
& = \frac{2\alpha}{r} H(r) +  \frac{2}{r} \int_{B_r} u Fu (r^2-\rho^2)^{\alpha} \mu
\\
& + \frac{1}{r} \int_{B_r}  u^2 \operatorname{div}(F) (r^2-\rho^2)^{\alpha}\mu  + \frac{1}{r} \int_{B_r} u^2 (r^2-\rho^2)^{\alpha} F \mu.
\end{align*}
Using (i) in Theorem \ref{Est} to estimate the third term in the right-hand side, and  (ii) to estimate the forth one, we obtain 
\begin{equation}\label{hr}
H'(r)= \frac{2\alpha + Q}{r} H(r) + O(1) H(r) +  \frac{2}{r} \int_{B_r}  u Fu (r^2-\rho^2)^{\alpha} \mu.
\end{equation}

Using \eqref{I} in \eqref{hr} we conclude that \eqref{hr1} holds. 

\end{proof}

Our next result is a basic first variation formula of the generalized energy $I(r)$. Its proof will be quite laborious, and it displays many of the beautiful geometric properties of the Baouendi-Grushin vector fields \eqref{df}.  

\begin{lemma}[First variation formula for $I(r)$]\label{L:fve}
There exists a universal $O(1)$ such that for every $r\in (0,R_1)$ one has
\begin{align}\label{i'imp}
I'(r)  = \frac{2\alpha+Q}{r} I(r)  + \frac{4 (\alpha +1)}{r} \int_{B_r} (Fu)^2 (r^2- \rho^2)^{\alpha}  \mu
 +  O(1) I(r) +  O(1) K r H(r),
\end{align}
where $K\ge 1$ is the constant in \eqref{vasump}.
\end{lemma}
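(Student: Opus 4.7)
The argument is a Rellich--Pohozaev type identity adapted to the sub-Riemannian geometry of the Baouendi--Grushin operator, with the vector field $F$ of \eqref{F} playing the role of the Euclidean Euler field (recall from Remark \ref{R:FZ} that $F \equiv Z$ when $A = I_N$). The discrepancies between $F$ and $Z$, and between $A$ and $I_N$, are systematically absorbed into universal error terms using Theorem \ref{Est}.

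\emph{Differentiation and first integration by parts against $F$.} Since $(r^2-\rho^2)^{\alpha+1}$ vanishes on $\partial B_r$, direct differentiation gives
$$I'(r) = 2(\alpha+1)\, r\int_{B_r}\big[\langle AXu, Xu\rangle+Vu^2\big](r^2-\rho^2)^\alpha.$$
The elementary identity $r(r^2-\rho^2)^\alpha = \frac{1}{r}(r^2-\rho^2)^{\alpha+1}+\frac{\rho^2}{r}(r^2-\rho^2)^\alpha$ combined with $F(r^2-\rho^2)^{\alpha+1}=-2(\alpha+1)\rho^2(r^2-\rho^2)^\alpha$ (a consequence of \eqref{Fr}) lets me convert the $\rho^2$ contribution into $F$ acting on the weight. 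Integrating by parts via $\int f(Fg) = -\int g(Ff) - \int fg\operatorname{div}F$, with no boundary term since $(r^2-\rho^2)^{\alpha+1}$ vanishes on $\partial B_r$, and using (i) of Theorem \ref{Est} to substitute $\operatorname{div}F = Q + O(\rho)$, I arrive at
$$I'(r) = \frac{2(\alpha+1)+Q}{r}\,I(r) + \frac{1}{r}\int_{B_r}(r^2-\rho^2)^{\alpha+1}F\big[\langle AXu, Xu\rangle+Vu^2\big] + E_1,$$
where $E_1 = \frac{O(1)}{r}\int_{B_r}\rho\big[\langle AXu, Xu\rangle+Vu^2\big](r^2-\rho^2)^{\alpha+1}$.

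\emph{Computing $F$ on the integrand and using the PDE.} Expanding $F\langle AXu, Xu\rangle = \langle(FA)Xu, Xu\rangle + 2\sum_{i,j} a_{ij}(FX_ju)X_iu$, bounding the first summand by $O(\rho)|Xu|^2$ via (vi), and writing $FX_j u = X_j(Fu) - X_j u + O(\rho)|Xu|$ via (vii), I get
$$F\langle AXu, Xu\rangle = 2\langle AXu, X(Fu)\rangle - 2\langle AXu, Xu\rangle + O(\rho)|Xu|^2.$$
For the pivotal $\langle AXu, X(Fu)\rangle$ integral, I test Definition \ref{D:ws} against $\varphi = Fu\cdot(r^2-\rho^2)^{\alpha+1}$; expanding $X\varphi$ and invoking $\rho\langle AXu, X\rho\rangle = \mu Fu$ (a direct consequence of \eqref{Fu}) yields
$$\int_{B_r}\langle AXu, X(Fu)\rangle(r^2-\rho^2)^{\alpha+1} = 2(\alpha+1)\int_{B_r}(Fu)^2\mu(r^2-\rho^2)^\alpha + \int_{B_r}VuFu(r^2-\rho^2)^{\alpha+1},$$
which is precisely where the $(Fu)^2\mu$ term on the right-hand side of \eqref{i'imp} is born. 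Combining with the pointwise identity $F(Vu^2) = (FV)u^2 + 2VuFu$ and the algebraic rewriting $-2\int\langle AXu, Xu\rangle(r^2-\rho^2)^{\alpha+1} = -2I(r)+2\int Vu^2(r^2-\rho^2)^{\alpha+1}$, the coefficients telescope to $\frac{2(\alpha+1)+Q-2}{r}I(r) = \frac{2\alpha+Q}{r}I(r)$, and the $(Fu)^2\mu$ term acquires the prefactor $\frac{4(\alpha+1)}{r}$, exactly as in \eqref{i'imp}.

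\emph{Error control and main obstacle.} The leftover terms split into three groups. The $O(\rho)|Xu|^2$ contributions and $E_1$ are absorbed using $\rho\le r$, ellipticity $|Xu|^2\le\lambda^{-1}\langle AXu, Xu\rangle$, and the bound $\int\langle AXu, Xu\rangle(r^2-\rho^2)^{\alpha+1}\le I(r)+O(K)r^2H(r)$ extracted from \eqref{vasump}; they contribute $O(1)I(r)+O(K)rH(r)$ after shrinking $R_1\le 1$ so that $r^2\le r$. The potential terms $\int Vu^2(r^2-\rho^2)^{\alpha+1}$ and $\int(FV)u^2(r^2-\rho^2)^{\alpha+1}$ are both $O(K)r^2H(r)$ via $|V|, |FV|\le K\psi$ together with $\psi\le\lambda^{-1}\mu$. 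The delicate piece, which is the main technical hurdle, is the cross term $\frac{4}{r}\int VuFu(r^2-\rho^2)^{\alpha+1}$: a direct Cauchy--Schwarz (with any choice of splitting parameter) produces an unwanted factor of $K$ in the $H(r)$ coefficient, which would be fatal for the sharp $\sqrt{K}$ dependence of the main theorem. The clean remedy is to exploit $2uFu = F(u^2)$ and integrate by parts with respect to $F$ a second time; this trades the cross term for contributions involving $(FV)u^2$, $Vu^2\rho^2$, and $Vu^2\operatorname{div}F$, each controllable by $O(K)rH(r)$ using (i) of Theorem \ref{Est} and \eqref{vasump}. Collecting everything yields \eqref{i'imp}.
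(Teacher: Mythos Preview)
Your proof is correct and follows essentially the same Rellich--Pohozaev strategy as the paper: differentiate, convert the $\rho^2$ weight into $F$ acting on $(r^2-\rho^2)^{\alpha+1}$, integrate by parts, and control errors via (i), (vi), (vii) of Theorem \ref{Est}; the delicate cross term $\int VuFu(r^2-\rho^2)^{\alpha+1}$ is disposed of exactly as in the paper, by writing $2uFu=F(u^2)$ and integrating by parts once more. The only organizational difference is that the paper invokes the pre-packaged Rellich identity of \cite{GV} with $G=(r^2-\rho^2)^{\alpha+1}F$ applied to the Dirichlet part alone (leaving the $Vu^2$ piece aside), whereas you unpack that identity by hand---expanding $F\langle AXu,Xu\rangle$ pointwise via the commutator (vii) and then testing the equation against $\varphi=Fu(r^2-\rho^2)^{\alpha+1}$---and treat $Vu^2$ together with the Dirichlet term; this produces a cross term with coefficient $4/r$ rather than $-2/r$, but the subsequent integration by parts handles either.
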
 

\begin{proof}
Differentiating the expression \eqref{II} of $I(r)$ we obtain,
\[
I'(r)= 2(\alpha+1)r \int_{B_r} <AXu, Xu> (r^2- \rho^2)^{\alpha} + 2(\alpha+1)r \int_{B_r} Vu^2(r^2-\rho^2)^{\alpha}.
\]
Using the identity
\[
(r^2 - \rho^2)^{\alpha} = \frac{1}{r^2} (r^2 - \rho^2)^{\alpha+1} + \frac{\rho^2}{r^2}(r^2 - \rho^2)^{\alpha},
\]
we find
\begin{align}\label{I'}
I'(r) & = \frac{2(\alpha+1)}{r} \int_{B_r} <AXu, Xu>(r^2- \rho^2)^{\alpha+1} 
\\
& + \frac{2(\alpha+1)}{r}\int_{B_r} <AXu, Xu>(r^2-\rho^2)^{\alpha}\rho^2 
+ 2(\alpha+1)r \int_{B_r} Vu^2(r^2-\rho^2)^{\alpha}.
\notag
\end{align}
The second term in the right-hand side of \eqref{I'} is dealt with as follows 
\begin{align}\label{div}
\frac{2(\alpha+1)}{r}\int_{B_r} <AXu, Xu>(r^2-\rho^2)^{\alpha}\rho^2= - \frac 1r \int_{B_r} <AXu, Xu> F(r^2-\rho^2)^{\alpha+1}.
\end{align}

To compute the integral in the right-hand side of \eqref{div} we now use the following Rellich type identity in Lemma 2.11 in \cite{GV}:
\begin{align}\label{re}
& \int_{\partial B_r} <AXu,Xu><G,\nu>  = 2 \int_{\partial B_r} a_{ij}X_i u <X_j,\nu> Gu 
\\
& -  2\int_{B_r} a_{ij} (\operatorname{div} X_i) X_ju Gu  - 2 \int_{B_r} a_{ij} X_iu [X_j,G]u
\notag\\
& + \int_{B_r} \operatorname{div} G <AXu,Xu> +  \int_{B_r} <(GA)Xu,Xu>  - 2 \int_{B_r} Gu  X_i(a_{ij} X_j u),
\notag
\end{align}
where  $G$ is a vector field, $GA$ is the matrix with coefficients $Ga_{ij}$, $ \nu $ denotes  the outer unit normal to $B_r$, and the summation convention over repeated indices has been adopted.
Since for the vector fields $X_1,...,X_N$ in \eqref{df} above we have $\operatorname{div} X_i = 0$, if in \eqref{re} we take a vector field such that $G\equiv 0$ on $\partial B_r$, we obtain
\begin{align}\label{rl}
&    \int_{B_r} \operatorname{div} G <AXu, Xu> =  2 \int_{B_r} a_{ij} X_{i} u [X_j,G]u 
\\
& - \int_{B_r} <(GA) Xu, Xu> + 2 \int_{B_r} Gu X_i (a_{ij} X_j u).
\notag
\end{align}
In the identity \eqref{rl} we now take $G= (r^2- \rho^2)^{\alpha+1}F$. We remark that, while in our situation the vector fields $X_i$ and $G$ are not smooth, one can nonetheless rigorously justify the implementation of \eqref{rl} as in \cite{GV} by standard approximation arguments based on the key estimates in Theorem \ref{Est} above.
Now we look at each individual term in \eqref{rl}. We first note that from \eqref{e0} the last integral in the right-hand side of \eqref{rl} equals $-2 \int_{B_r} Fu Vu (r^2- \rho^2)^{\alpha+1}$. For the left-hand side of \eqref{rl} we have instead
\begin{align}\label{rlbis}
& \int_{B_r} \operatorname{div} G <AXu, Xu>= \int_{B_r} \operatorname{div} F <AXu, Xu> (r^2-\rho^2)^{\alpha+1}
\\
& + \int_{B_r}  <AXu, Xu> F(r^2-\rho^2)^{\alpha+1}.
\notag
\end{align}
Combining \eqref{rl} and \eqref{rlbis}, we reach the conclusion
\begin{align}\label{rlter}
& - \int_{B_r}  <AXu, Xu> F(r^2-\rho^2)^{\alpha+1} = \int_{B_r} \operatorname{div} F <AXu, Xu> (r^2-\rho^2)^{\alpha+1}
\\
&  + \int_{B_r} <(FA) Xu, Xu> (r^2- \rho^2)^{\alpha+1} - 2 \int_{B_r} a_{ij} X_{i} u [X_j, G]u
\notag
\\
&  - 2 \int_{B_r} Fu Vu (r^2- \rho^2)^{\alpha+1}.
\notag
\end{align}

Using (i) in Theorem \ref{Est} we find
\begin{align}\label{ok1}
&  \int_{B_r} \operatorname{div} F <AXu, Xu> (r^2-\rho^2)^{\alpha+1} = Q \int_{B_r}  <AXu, Xu>(r^2-\rho^2)^{\alpha+1} 
\\
& + O(r) \int_{B_r}  <AXu, Xu> (r^2-\rho^2)^{\alpha+1}.
\notag
\end{align}
Using (vi) in Theorem \ref{Est} we have
\begin{equation}\label{ok2}
\int_{B_r} <(FA) Xu, Xu> (r^2- \rho^2)^{\alpha+1} = O(r) \int_{B_r}  <AXu, Xu> (r^2-\rho^2)^{\alpha+1}.
\end{equation}
We next keep in mind that
\[
[X_j,G] = - 2(\alpha +1)\rho (r^2- \rho^2)^{\alpha}X_j \rho F + (r^2- \rho^2)^{\alpha+1} [X_j,F].
\]
This gives
\begin{align*}
a_{ij} X_i u [X_j,G] u & = - 2(\alpha +1) (r^2- \rho^2)^{\alpha} \rho <A X \rho,Xu> Fu + (r^2- \rho^2)^{\alpha+1} a_{ij} X_i u [X_i,F]u
\\
& = - 2(\alpha +1) (r^2- \rho^2)^{\alpha} (Fu)^2 \mu   + (r^2- \rho^2)^{\alpha+1} a_{ij} X_i u \left([X_j,F]u - X_j u\right)
\\
& \ \ \ + (r^2- \rho^2)^{\alpha+1} <AXu,Xu>,
\end{align*}
where we have used the fact that
\[
\rho <AX\rho,Xu> = \mu Fu,
\]
which follows from \eqref{Fu} above. We thus conclude that
\begin{align}\label{ok3}
& - 2 \int_{B_r} a_{ij} X_{i} u [X_j, G]u =   - 2 \int_{B_r}  <AXu,Xu> (r^2- \rho^2)^{\alpha+1}
\\
& + O(r) \int_{B_r}  <AXu,Xu> (r^2- \rho^2)^{\alpha+1} + 4 (\alpha +1) \int_{B_r} (Fu)^2 (r^2- \rho^2)^{\alpha}  \mu,
\notag 
\end{align}
where we have used the crucial estimate (vii) in Theorem \ref{Est} to control the integral 
\[
\int_{B_r}a_{ij} X_i u \left([X_j,F]u - X_j u\right)(r^2- \rho^2)^{\alpha+1}.
\]
Using \eqref{ok1}, \eqref{ok2} and \eqref{ok3} in \eqref{rlter}, we conclude
\begin{align}\label{rlquater}
& - \int_{B_r}  <AXu, Xu> F(r^2-\rho^2)^{\alpha+1} = (Q-2) \int_{B_r}  <AXu,Xu> (r^2- \rho^2)^{\alpha+1} 
\\
& + O(r) \int_{B_r}  <AXu,Xu> (r^2- \rho^2)^{\alpha+1} + 4 (\alpha +1) \int_{B_r} (Fu)^2 (r^2- \rho^2)^{\alpha}  \mu
\notag\\
& - 2 \int_{B_r} Fu Vu (r^2- \rho^2)^{\alpha+1}.
\notag
\end{align}
With \eqref{rlquater} in hands we now return to \eqref{div} to find
\begin{align}\label{divbis}
& \frac{2(\alpha+1)}{r}\int_{B_r} <AXu, Xu>(r^2-\rho^2)^{\alpha}\rho^2= \frac{Q-2}{r} \int_{B_r}  <AXu,Xu> (r^2- \rho^2)^{\alpha+1} 
\\
& + O(1) \int_{B_r}  <AXu,Xu> (r^2- \rho^2)^{\alpha+1} + \frac{4 (\alpha +1)}{r} \int_{B_r} (Fu)^2 (r^2- \rho^2)^{\alpha}  \mu
\notag\\
& - \frac{2}{r} \int_{B_r} Fu Vu (r^2- \rho^2)^{\alpha+1}.
\notag
\end{align}

The equation \eqref{divbis} is the central one in the proof of the first variation of the energy. Such equation allows us to unravel the second term in the right-hand side of \eqref{div} above, to which we now return to find
\begin{align*}
I'(r) & = \frac{2\alpha+Q}{r} \int_{B_r} <AXu, Xu>(r^2- \rho^2)^{\alpha+1} + \frac{4 (\alpha +1)}{r} \int_{B_r} (Fu)^2 (r^2- \rho^2)^{\alpha}  \mu
\\
&  + O(1) \int_{B_r}  <AXu,Xu> (r^2- \rho^2)^{\alpha+1} - \frac{2}{r} \int_{B_r} Fu Vu (r^2- \rho^2)^{\alpha+1}
\notag\\
&  + 2(\alpha+1)r \int_{B_r} Vu^2(r^2-\rho^2)^{\alpha}.
\notag
\end{align*}
Recalling the definition \eqref{II} of $I(r)$ we see that we can rewrite the latter equation as follows
\begin{align}\label{II'}
I'(r) & = \frac{2\alpha+Q}{r} I(r) -  \frac{2\alpha+Q}{r} \int_{B_r}  Vu^2 (r^2-\rho^2)^{\alpha+1} +  \frac{4 (\alpha +1)}{r} \int_{B_r} (Fu)^2 (r^2- \rho^2)^{\alpha}  \mu
\\
&  + O(1) I(r) - O(1) \int_{B_r}  V u^2 (r^2- \rho^2)^{\alpha+1} - \frac{2}{r} \int_{B_r} Fu Vu (r^2- \rho^2)^{\alpha+1}
\notag\\
&  + 2(\alpha+1)r \int_{B_r} Vu^2(r^2-\rho^2)^{\alpha}.
\notag
\end{align}
An integration by parts now gives
\begin{align}\label{calc}
& - \frac{2}{r} \int_{B_r} Fu Vu (r^2- \rho^2)^{\alpha+1} = - \frac{1}{r} \int_{B_r} F(u^2) V (r^2- \rho^2)^{\alpha+1}
\\
& =  \frac{1}{r} \int_{B_r} u^2 \operatorname{div}((r^2- \rho^2)^{\alpha+1} V F) = \frac{1}{r} \int_{B_r} V u^2 (r^2- \rho^2)^{\alpha+1} \operatorname{div} F 
\notag
\\
& + \frac{1}{r} \int_{B_r} u^2 FV (r^2- \rho^2)^{\alpha+1}  - \frac{2(\alpha + 1)}{r} \int_{B_r} V u^2 \rho F\rho (r^2- \rho^2)^{\alpha}.
\notag
\end{align}
Since one has trivially $(r^2- \rho^2)^{\alpha+1} \le r^2 (r^2- \rho^2)^{\alpha}$, from the assumptions \eqref{vasump} above, from \eqref{ue} and from (i) in Theorem \ref{Est}, we find
\begin{equation}\label{c1}
\left|\frac{1}{r} \int_{B_r} V u^2 (r^2- \rho^2)^{\alpha+1} \operatorname{div} F\right| \le C K r \int_{B_r} u^2 (r^2- \rho^2)^{\alpha} \mu = C K r H(r),
\end{equation}
 where $C = C(\beta,m,k,\lambda)>0$ is universal. Similarly, one has
 \begin{equation}\label{c2}
 \left|\frac{1}{r} \int_{B_r} u^2 FV (r^2- \rho^2)^{\alpha+1}\right| \le C K r H(r).
 \end{equation}
 Now  we rewrite $- \frac{2\alpha+Q}{r}\int_{B_r} Vu^2 (r^2 - \rho^2)^{\alpha+1}$ as
 \begin{align}\label{d1}
& - \frac{2\alpha+Q}{r}\int_{B_r} Vu^2 (r^2 - \rho^2)^{\alpha+1}= -2(\alpha+Q)r \int_{B_r} Vu^2 (r^2 - \rho^2)^{\alpha} 
\\
& + \frac{2 \alpha +Q}{r} \int_{B_r} Vu^2 (r^2 - \rho^2)^{\alpha} \rho^2
\notag
\end{align}
 
 Finally, since by \eqref{Fr} we have $F\rho = \rho$, we obtain
\begin{equation}\label{d2}
 \frac{2(\alpha + 1)}{r} \int_{B_r} V u^2 \rho F\rho (r^2- \rho^2)^{\alpha} = \frac{2(\alpha + 1)}{r} \int_{B_r} V u^2 \rho^2 (r^2- \rho^2)^{\alpha}
 \end{equation}
 Therefore by using \eqref{c1}, \eqref{c2}, \eqref{d1} and \eqref{d2} in \eqref{calc}, we thus conclude

\[
I'(r)  = \frac{2\alpha+Q}{r} I(r)  + \frac{4 (\alpha +1)}{r} \int_{B_r} (Fu)^2 (r^2- \rho^2)^{\alpha}  \mu
 +  O(1) I(r) + O(1) K r H(r),
\]
which is \eqref{i'imp}.

\end{proof}

We are now in a position to provide the

\begin{proof}[Proof of Theorem \ref{T:mono}]
Using \eqref{N}, and the equations \eqref{hr1} in Lemma \ref{L:H'} and \eqref{i'imp} in Lemma \ref{L:fve}, we find for some universal $C_1, C_3 \ge 0$,
\begin{align}\label{fn'}
N'(r) & = \frac{I'(r)}{H(r)} - \frac{H'(r)}{H(r)} N(r)  = O(1) N(r) + O(1) K r
\\
& +  \left(4(\alpha+1) \int_{B_r} (Fu)^2  (r^2 - \rho^2)^{\alpha} \mu
 -   \frac{1}{(\alpha+1)} \frac{I(r)^2}{H(r)}\right) \frac{1}{r H(r)}
\notag\\
& \ge - C_1 N(r) - C_3 K r,
\notag
\end{align}
where in the last inequality, we have used the fact that, in view of \eqref{I} in Lemma \ref{L:I}, the Cauchy-Schwarz inequality and the definition \eqref{H} of $H(r)$, we have
\begin{align*}
I(r)^2 & = 4(\alpha+1)^2 \left(\int_{B_r} u Fu (r^2-\rho^2)^{\alpha} \mu\right)^2
\\
& \le 4(\alpha+1)^2 H(r) \int_{B_r} (Fu)^2 (r^2-\rho^2)^{\alpha} \mu.
\end{align*} 
The inequality \eqref{fn'} implies that, with $C_2 = C_3/2$, the function
\[
r \to e^{C_1r} (N(r) + C_2 K r^2)
\]
is nondecreasing.

\end{proof}

\section{Proof of Theorem \ref{main}}\label{S:prf}

This final section is devoted to proving the main result in this paper, Theorem \ref{main}. We start from Theorem \ref{T:mono} which implies
\[
e^{C_1r} (N(r) + C_2 K r^2) \le e^{C_1 s} (N(s) + C_2 K s^2), \ \ \ \ \ \ \ \ \ \text{for}\  0<r<s<R_1.
\]
Henceforth, without loss of generality we assume that $R_1\le 1$. The latter monotonicity property implies, in particular, the existence of universal constants $C_2>0$ and $\overline C\ge 1$ such that
\begin{equation}\label{ineq}
N(r) \leq \overline C (N(s) + C_2 K),\ \ \ \ \ \ \ \ \ \text{for}\  0<r<s<R_1.
\end{equation}
Returning to \eqref{hr1} in Lemma \ref{L:H'}, we rewrite it in the following form
\begin{equation}\label{vnbis}
\frac{d}{dr} \log\left(\frac{H(r)}{r^{2 \alpha+ Q}}\right) = O(1) + \frac{1}{(\alpha +1)r} N(r),\ \ \ \ 0<r<R_1,
\end{equation}
where $|O(1)| \le C$, with $C$ universal.

Suppose now that $0<r_1 < r_2 < 2r_2 < r_3< R_1$. Integrating \eqref{vnbis} between $r_1$ and $2r_2$, and using \eqref{ineq}, we find 
\begin{equation}\label{top}
\frac{\log \frac{H(2r_2)}{H(r_1)} - C}{\log \left(\frac{2r_2}{r_1}\right)} - (2\alpha + Q)  \le   \frac{\overline C}{\alpha + 1} \left(N(2r_2) + C_2 K\right).
\end{equation}
Next, we integrate \eqref{vnbis} between $2r_2$ and $r_3$, and again using \eqref{ineq} we find 
\begin{equation}\label{bottom}
\frac{\overline C}{\alpha + 1} \left(N(2r_2) - \overline C C_2 K \right)
\le  \overline{C}^2 \left[\frac{\log \frac{H(r_3)}{H(2r_2)} + C}{
 \log\left(\frac{r_3}{2r_2}\right)}  - (2\alpha + Q)\right].
\end{equation}
Combining \eqref{top} and \eqref{bottom} we conclude
\[
\frac{\log \frac{H(2r_2)}{H(r_1)} - C}{\overline{C}^2 \log \left(\frac{2r_2}{r_1}\right)}   \le  \frac{\log \frac{H(r_3)}{H(2r_2)} + C}{
 \log\left(\frac{r_3}{2r_2}\right)} + C' \frac{K}{\alpha +1} - \left(1 - \frac{1}{\overline{C}^2}\right)(2\alpha + Q),
 \]
 where we have let $C' = (\overline C + 1)/\overline C$. Since $\overline C \ge 1$, if we now set
\[
\alpha_0 = 
 \log\left(\frac{r_3}{2r_2}\right),\ \ \ \beta_0 = \overline C^{2} \log \left(\frac{2r_2}{r_1}\right),
 \]
then  we obtain
\begin{equation}\label{warning}
\alpha_0 \log \frac{H(2r_2)}{H(r_1)} \le \beta_0  \log \frac{H(r_3)}{H(2r_2)}  + C (\alpha_0 + \beta_0) + C' \frac{K}{\alpha + 1} \alpha_0 \beta_0.
\end{equation}
Dividing both sides of the latter inequality by the quantity $\alpha_0 + \beta_0$, we find
\[
 \log \left(\frac{H(2r_2)}{H(r_1)}\right)^\frac{\alpha_0}{\alpha_0 + \beta_0} \le   \log \left(\frac{H(r_3)}{H(2r_2)}\right)^\frac{\beta_0}{\alpha_0 + \beta_0} + C  + C' \frac{K}{\alpha + 1} \frac{\alpha \beta_0}{\alpha_0 + \beta_0}.
\]
This gives
\begin{equation}\label{warning2}
\log H(2r_2) \le \log \left[H(r_3)^\frac{\beta_0}{\alpha_0 + \beta_0} H(r_1)^\frac{\alpha_0}{\alpha_0 + \beta_0}\right] + C  + C' \frac{K}{\alpha + 1} \alpha_0,
\end{equation}
 where we have used the trivial estimate $\frac{\beta_0}{\alpha_0 + \beta_0} \le 1$. Exponentiating both sides of \eqref{warning2}  and choosing $\alpha = \sqrt K$, we conclude
 \begin{equation}\label{warning4}
H(2r_2) \le e^C \left(\frac{r_3}{2r_2}\right)^{C' \sqrt K} H(r_3)^\frac{\beta_0}{\alpha_0 + \beta_0} H(r_1)^\frac{\alpha_0}{\alpha_0 + \beta_0}.
\end{equation}

We now consider the quantity
\begin{equation}\label{h}
h(r) = \int_{B_r} u^2 \mu.
\end{equation}
The following estimates are easily verified from \eqref{h1} and \eqref{h}
\[
H(r) \leq r^{2\alpha} h(r),\ \ \ \ \  \text{and}\ \ \ \ \ h(r) \leq \frac{H(s)}{(s^2 - r^2)^{\alpha}},\  0 < r < s < R_1.
\]
From these estimates and \eqref{warning4} we obtain
\begin{equation}\label{3ball}
h(r_2) \le e^C  (\frac{r_3}{2r_2})^{C''\sqrt{K}}  h(r_3)^\frac{\beta_0}{\alpha_0 + \beta_0} h(r_1)^\frac{\alpha_0}{\alpha_0 + \beta_0},
\end{equation}
for $r_1 <r_2 <2r_2 < r_3 < R_1$. At this point, we take $r_2= \frac{R_1}{3}$, $r_3=R_1$. If  
\[
C_0 = ||u||_{L^\infty(B_{R_1})}^2 \int_{B_{R_1}} \mu > 0,
\]
 then we clearly have $h(R_1) \le C_0$, and we conclude from \eqref{3ball} that
\begin{equation}\label{3ballbis}
h(R_1/3)^{1 + \frac{\beta_0}{\alpha_0}} \le e^{C(1 + \frac{\beta_0}{\alpha_0})}  \left(\frac{3}{2}\right)^{C''(1+\frac{\beta_0}{\alpha_0})\sqrt{K}}  C_0^\frac{\beta_0}{\alpha_0} h(r),\ \ \ \ 0<r<R_1/3.
\end{equation}
If we set $A = e^C$ and $\gamma =  \frac{\overline C^{2}}{\log(3/2)}$, then $q = \beta_0/\alpha_0 = - \log(r/R_1)^\gamma - \overline C^{2}$, and recalling that $\overline C \ge 1$ we obtain from \eqref{3ballbis} for $0<r<R_1/3$
\[
h(r)  \ge C_0 \left(\frac{h(R_1/3)}{AC_0}\right)^{1 + q} \left(\frac{3}{2}\right)^{-C''(1+q)\sqrt{K}}
 \ge C_0 M_0^{1+q} \left(\frac{r}{R_1}\right)^{B\sqrt K},
\]
where we have let $M_0 = \frac{h(R_1/3)}{AC_0}$, and $B = \gamma C'' \log(3/2)$. 
If $M_0\ge 1$ this estimate implies in a trivial way for $0<r < R_1/3$
\[
h(r) \geq C_0  \left(\frac{r}{R_1}\right)^{B\sqrt K}.
\]
If instead $0< M_0\le 1$, keeping in mind that $\overline C \ge 1$, with $B' = \max\{B,\gamma \log(1/M_0)\}$ we obtain for $0<r < R_1/3$
\[
h(r) \geq C_0  \left(\frac{r}{R_1}\right)^{B\sqrt K + \gamma \log(1/M_0)}\ge C_0   \left(\frac{r}{R_1}\right)^{B' (1+\sqrt K)} \ge C_0   \left(\frac{r}{R_1}\right)^{2 B' \sqrt K},
\]
where the last inequality follows by remembering that $K \ge 1$.
In either case, the desired conclusion of Theorem \ref{main} follows by noticing that $h(r) \leq ||u||^2 _{L^{\infty} (B_r)} \int_{B_r} \mu$, and that $\int_{B_r} \mu \le \la^{-1} \int_{B_r} \psi = \la^{-1} \omega r^Q$, where we have let $\omega = \int_{B_1} \psi$. In fact, we would find 
\[
||u||_{L^{\infty} (B_r)} \ge C_3   \left(\frac{r}{R_1}\right)^{C_4 \sqrt K},
\]
with $C_3 = C_0 \sqrt{\frac{\la}{\omega R_1^Q}}$ and $C_4 = 2 B'$. This finishes the proof of Theorem \ref{main}.

\end{document}